\theoremstyle{plain}
  \newtheorem{thm}{Theorem}[section]
\theoremstyle{definition}
\def\fr{\mathfrak}
\def\li{\langle}
\def\ri{\rangle}
\begin{document}

\title[Moduli spaces and Morita equivalence]{Moduli spaces of flat connections and Morita equivalence of quantum tori}
\author{Pavol \v Severa}
%\date{}
\address{Section of Mathematics, University of Geneva, 2-4 rue du Li\`evre,
c.p. 64, 1211 Gen\`eve 4, Switzerland, on leave from FMFI UK Bratislava, Slovakia}
\email{pavol.severa@gmail.com}
\maketitle

\begin{abstract}
We study moduli spaces of flat connections on surfaces with boundary, with boundary conditions given by Lagrangian Lie subalgebras. The resulting symplectic manifolds are closely related with Poisson-Lie groups and their algebraic structure (such as symplectic groupoid structure) gets a geometrical explanation via 3-dimensional cobordisms. We give a formula for the symplectic form in terms of holonomies, based on a central extension of the gauge group by closed 2-forms. This construction is finally used for a certain extension of the Morita equivalence of quantum tori to the world of Poisson-Lie groups.
\end{abstract}

% !TeX root = surf3.tex

\newcommand{\picSqOne}{
\begin{tikzpicture}[very thick]
  \fill[lime!30!white] (0,0) rectangle (1,1);
  \draw[red] (0,0)--node[below]{$\mathfrak{h}_1$}(1,0)
             (0,1)--node[above]{$\mathfrak{h}_1$}(1,1);
  \draw[blue] (0,0)--node[left]{$\mathfrak{h}_2$}(0,1)
              (1,0)--node[right]{$\mathfrak{h}_2$}(1,1);
  \foreach \x in {(0,0),(1,0),(0,1),(1,1)}
    \fill \x circle (0.03cm);
\end{tikzpicture}
}

%%%%%%%%%%%%%%%%%%%%%

\newcommand{\picMul}{
\begin{tikzpicture}[thick,join=round]
  \path[fill=blue,draw=blue] (0,2)..controls +(0.5,-0.75)..++(1,0)
       --++(0.5,0.5)..controls +(-0.5,-0.75)..++(-1,0);
  \draw[very thick] (1.5,2.5)..controls +(-0.5,-0.75)..++(-1,0);
  \fill[lime!30!white] (-1,2)--++(1,0)--++(0.5,0.5)--++(-1,0);
  \fill[lime!30!white] (1,2)--++(1,0)--++(0.5,0.5)--++(-1,0);
  \path[fill=red,draw=red] (0,0)..controls +(-0.25,1)..(-1,2)--(0,2)..controls +(0.5,-0.75)..(1,2)
              --(2,2)..controls +(-0.75,-1)..(1,0)--cycle;
  \path[fill=blue,draw=blue] (1.5,0.5)..controls +(0.25,1)..++(1,2)--++(-0.5,-0.5)
              ..controls +(-0.75,-1)..(1,0)--cycle;

  \draw[red] (-1,2) ++(0.5,0.5)--+(1,0)
              (1,2) ++(0.5,0.5)--+(1,0);
  \draw[blue] (-1,2)--++(0.5,0.5) (0,2)--++(0.5,0.5)
              (1,2)--++(0.5,0.5) (2,2)--++(0.5,0.5);

  \draw[very thick] (0,0)..controls +(-0.25,1)..(-1,2) (0,2)..controls +(0.5,-0.75)..(1,2)
              (1,0) ++(0.5,0.5)..controls +(0.25,1)..++(1,2) ++(-0.5,-0.5)
              ..controls +(-0.75,-1)..(1,0);
  \foreach \x in {(0,0),(1,0),(1.5,0.5),
                  (-1,2),(0,2),(0.5,2.5),(-0.5,2.5),
                  (1,2),(2,2),(2.5,2.5),(1.5,2.5)}
        \fill \x circle (0.025cm);
%   \draw[white!90!black,thin,style=dotted] (0.5,0.5)..controls +(-0.25,1)..++(-1,2)
%           (0,0)--(0.5,0.5)--+(1,0);
%   \filldraw[draw=black!60!white,fill=white!90!black,thin] (0.5,0.5) circle (0.03cm);

\end{tikzpicture}
}

%%%%%%%%%%%%%%%%%%%%%%%%%%%%%

\newcommand{\picEx}{
\begin{tikzpicture}[very thick]
 \fill[even odd rule,lime!30!white]
        (1,0) {coordinate (a)} arc (0:60:2cm) {coordinate (b)}
          arc (120:180:2cm) {coordinate(c)} arc (240:300:2cm);
  \fill[white] (0,0.2) {coordinate (d)} ..controls+(0.6,-0.4)and+(0.5,0.3)..(0,1.1) {coordinate (e)} (d)..controls+(-0.6,-0.4)and+(-0.5,0.3)..(e)--cycle;

  \draw[red] (a) arc (0:60:2cm);
  \draw[blue] (b) arc (120:180:2cm);
  \draw[violet] (c) arc (240:300:2cm);
  \draw[red] (d)..controls+(0.6,-0.4)and+(0.5,0.3)..(e);
  \draw[blue] (d)..controls+(-0.6,-0.4)and+(-0.5,0.3)..(e);

  \foreach \x in {(a),(b),(c),(d),(e)}
    \fill \x circle (0.03cm);
\end{tikzpicture}
}

%%%%%%%%%%%%%%%

\newcommand{\picExTot}{
\begin{tikzpicture}[very thick]
\begin{scope}
\fill[even odd rule,lime!30!white]
        (1,0) {coordinate (a)} arc (0:60:2cm) {coordinate (b)}
          arc (120:180:2cm) {coordinate(c)} arc (240:300:2cm);
  \fill[white] (0,0.2) {coordinate (d)} ..controls+(0.6,-0.4)and+(0.5,0.3)..(0,1.1) {coordinate (e)} (d)..controls+(-0.6,-0.4)and+(-0.5,0.3)..(e);

  \draw[red] (a) arc (0:60:2cm);
  \draw[blue] (b) arc (120:180:2cm);
  \draw[violet] (c) arc (240:300:2cm);
  \draw[red] (d)..controls+(0.6,-0.4)and+(0.5,0.3)..(e);
  \draw[blue] (d)..controls+(-0.6,-0.4)and+(-0.5,0.3)..(e);

  \draw[->,very thick,gray] (1.75,0.75)--(2.25,0.75);

  \foreach \x in {(a),(b),(c),(d),(e)}
    \fill \x circle (0.03cm);
\end{scope}

\begin{scope}[xshift=4cm]
   \fill[even odd rule,lime!30!white]
        (1,0) {coordinate (a)} arc (0:60:2cm) {coordinate (b)}
          arc (120:180:2cm) {coordinate(c)} arc (240:300:2cm);
  \fill[white] (0,0.2) {coordinate (d)} ..controls+(0.6,-0.4)and+(0.5,0.3)..(0,1.1) {coordinate (e)} (d)..controls+(-0.6,-0.4)and+(-0.5,0.3)..(e);

  \draw[red] (a) arc (0:60:2cm);
  \draw[blue] (b) arc (120:180:2cm);
  \draw[violet] (c) arc (240:300:2cm);
  \draw[red] (d)..controls+(0.6,-0.4)and+(0.5,0.3)..(e);
  \draw[blue] (d)..controls+(-0.6,-0.4)and+(-0.5,0.3)..(e);
  \draw[dotted] (b)--(e);

  \draw[->,very thick,gray] (1.75,0.75)--(2.25,0.75);

  \foreach \x in {(a),(b),(c),(d),(e)}
    \fill \x circle (0.03cm);

\end{scope}

\begin{scope}[xshift=8cm,yshift=0.75cm]
    \fill[lime!30!white] (90:1cm)--(141:1cm)--(193:1cm)--(243:1cm)--(297:1cm)--(347:1cm)--(399:1cm)--cycle;
  \draw[->,red] (399:1cm)--node[auto,swap]{$r_1$}(90:1cm);
  \draw[->,red]  (297:1cm)--node[auto,swap]{$r_2$}(347:1cm);
  \draw[->,blue] (90:1cm)--node[auto,swap]{$b_1$}(141:1cm);
  \draw[->,blue](193:1cm)--node[auto,swap]{$b_2$}(243:1cm);
  \draw[->,violet] (243:1cm)--node[auto,swap]{$v$}(297:1cm);
  \draw[->,dotted] (141:1cm)node[minimum size=0.4cm](a){}-- node[auto,left]{\it g\/}(193:1cm);
  \draw[->,dotted](347:1cm)-- node[auto,right]{$g^{-1}$}(399:1cm)node(b)[minimum size=0.4cm]{};
  %\draw (a) to[bend left=120] (b);
  \draw[white!85!black,ultra thick,<->] (a.south west).. controls +(-1.5,2) and +(1.5,2)..(b.south east);
  \node (f) at (0,-1.5) {$b_1\, g\, b_2\, v\, r_2\, g^{-1} r_1 =1$};

\end{scope}

\end{tikzpicture}
}

%%%%%%%%%%%%%%%%%

\newcommand{\picSqTwo}{
\begin{tikzpicture}[very thick]
  \fill[lime!30!white] (0,0) rectangle (1,1);
  \draw[red,->] (0,0)--node[below]{$r_1$}(1,0);
  \draw[red,->] (0,1)--node[above]{$r_2$}(1,1);
  \draw[blue,->] (0,0)--node[left]{$b_2$}(0,1);
  \draw[blue,->] (1,0)--node[right]{$b_1$}(1,1);
\end{tikzpicture}
}

%%%%%%%%%%%%%%%%

\newcommand{\picSqThree}{
\begin{tikzpicture}[very thick]
  \fill[lime!30!white] (0,0) rectangle (1,1);
  \draw[red] (0,0)--node[below]{$\mathfrak{r}$}(1,0);
  \draw[violet] (0,1)--node[above]{$\mathfrak{v}$}(1,1);
  \draw[blue] (0,0)--node[left]{$\mathfrak{b}$}(0,1)
              (1,0)--node[right]{$\mathfrak{b}$}(1,1);
  \foreach \x in {(0,0),(1,0),(0,1),(1,1)}
    \fill \x circle (0.03cm);

  \begin{scope}[xshift=3cm]
  \fill[lime!30!white] (0,0) rectangle (1,1);
  \draw[red,->] (0,0)--node[below]{$r$}(1,0);
  \draw[violet,->] (0,1)--node[above]{$v$}(1,1);
  \draw[blue,->] (0,0)--node[left]{$b_2$}(0,1);
  \draw[blue,->] (1,0)--node[right]{$b_1$}(1,1);
  \end{scope}
\end{tikzpicture}
}

%%%%%%%%%%%%%%%%

\newcommand{\picTri}{
\begin{tikzpicture}[very thick]
  \fill[lime!30!white] (-30:1)--(90:1)--(210:1);
  \draw[red] (210:1)--node[below]{$\mathfrak{r}$}(-30:1);
  \draw[violet] (-30:1)--node[auto,swap]{$\mathfrak{v}$}(90:1);
  \draw[blue] (90:1)--node[auto,swap]{$\mathfrak{b}$}(210:1);
  \foreach \x in {(-30:1),(90:1),(210:1)}
    \fill \x circle (0.03cm);

  \begin{scope}[xshift=4cm,->]
  \fill[lime!30!white] (-30:1)--(90:1)--(210:1);
  \draw[red] (210:1)--node[below]{$r$}(-30:1);
  \draw[violet] (-30:1)--node[auto,swap]{$v$}(90:1);
  \draw[blue] (90:1)--node[auto,swap]{$b$}(210:1);

  \end{scope}
\end{tikzpicture}
}

%%%%%%%%%%%%%%%%%%

\newcommand{\picDouble}{
\begin{tikzpicture}[very thick]
 \fill[even odd rule,lime!30!white]
    (0,0)circle(0.7cm) (0,0)circle(1.3cm);

  \draw[red] (0,-0.7) arc (-90:90:0.7cm) (0.45,0)node{$\fr{r}$};
  \draw[blue] (0,0.7) arc (90:270:0.7cm) (-0.45,0)node{$\fr{b}$};
  \draw[red] (0,-1.3) arc (-90:90:1.3cm) (1.5,0)node{$\fr{r}$};
  \draw[blue] (0,1.3) arc (90:270:1.3cm) (-1.5,0)node{$\fr{b}$};
  \foreach \x in {(0,-0.7),(0,0.7),(0,1.3),(0,-1.3)}
    \fill \x circle (0.03cm);

  \begin{scope}[xshift=5cm,->]
  \fill[even odd rule,lime!30!white]
    (0,0)circle(0.7cm) (0,0)circle(1.3cm);
  \draw[red] (0,0.7) arc (90:-90:0.7cm);
  \node[red] at (0.45,0) {$r_1$};
  \draw[blue] (0,-0.7) arc (270:90:0.7cm);
  \node[blue] at (-0.45,0) {$b_1$};
  \draw[red] (0,1.3) arc (90:-90:1.3cm);
  \node[red] at (1.5,0) {$r_2$};
  \draw[blue] (0,-1.3) arc (270:90:1.3cm);
  \node[blue] at (-1.5,0) {$b_2$};
  \draw[dotted] (0,0.7)--node[right]{$g$}(0,1.3);
  \end{scope}

\end{tikzpicture}
}

%%%%%%%%%%%%%%%%%

\newcommand{\picSqGlue}{
\begin{tikzpicture}[very thick]
  \fill[lime!30!white] (0,0) rectangle (1,1);
  \draw[red] (0,0)--node[below]{$\mathfrak{r}$}(1,0)
             (0,1)--node[above]{$\mathfrak{r}$}(1,1);
  \draw[blue] (0,0)--node[left]{$\mathfrak{b}$}(0,1)
              (1,0)--node[right]{$\mathfrak{b}$}(1,1);
  \foreach \x in {(0,0),(1,0),(0,1),(1,1)}
    \fill \x circle (0.03cm);

 \begin{scope}[xshift=1.45cm]
  \fill[lime!30!white] (0,0) rectangle (1,1);
  \draw[red] (0,0)--node[below]{$\mathfrak{r}$}(1,0)
             (0,1)--node[above]{$\mathfrak{r}$}(1,1);
  \draw[blue] (0,0)--(0,1)
              (1,0)--node[right]{$\mathfrak{b}$}(1,1);
  \foreach \x in {(0,0),(1,0),(0,1),(1,1)}
    \fill \x circle (0.03cm);
 \end{scope}

\end{tikzpicture}
}

%%%%%%%%%%%%%%%%

\newcommand{\picSqTwoOne}{
\begin{tikzpicture}[very thick]
  \fill[lime!30!white] (0,0) rectangle (1,1);
  \draw[red] (0,0)--node[below]{$\mathfrak{r}$}(1,0);
  \draw[blue] (1,0)--node[right]{$\mathfrak{b}$}(1,1)
             (0,0)--node[left]{$\mathfrak{b}$}(0,1);
  \draw[violet] (0,1)--node[above]{$\mathfrak{v}$}(1,1);
  \foreach \x in {(0,0),(1,0),(0,1),(1,1)}
    \fill \x circle (0.03cm);
\end{tikzpicture}
}

%%%%%%%%%%%%%%%%%

\newcommand{\picSqGlueTwo}{
\begin{tikzpicture}[very thick]
  \fill[lime!30!white] (0,0) rectangle (1,1);
  \draw[red] (0,0)--node[below]{$\mathfrak{r}$}(1,0);
  \draw[violet]    (0,1)--node[above]{$\mathfrak{v}$}(1,1);
  \draw[blue] (0,0)--node[left]{$\mathfrak{b}$}(0,1)
              (1,0)--node[right]{$\mathfrak{b}$}(1,1);
  \foreach \x in {(0,0),(1,0),(0,1),(1,1)}
    \fill \x circle (0.03cm);

 \begin{scope}[xshift=1.45cm]
  \fill[lime!30!white] (0,0) rectangle (1,1);
  \draw[red] (0,0)--node[below]{$\mathfrak{r}$}(1,0);
  \draw[violet]    (0,1)--node[above]{$\mathfrak{v}$}(1,1);
  \draw[blue] (0,0)--(0,1)
              (1,0)--node[right]{$\mathfrak{b}$}(1,1);
  \foreach \x in {(0,0),(1,0),(0,1),(1,1)}
    \fill \x circle (0.03cm);
 \end{scope}

\end{tikzpicture}
}

%%%%%%%%%%%%%%%%%%%%%%

\newcommand{\picSqOneOne}{
\begin{tikzpicture}[very thick]
  \fill[lime!30!white] (0,0) rectangle (1,1);
  \draw[red] (0,0)--node[below]{$\mathfrak{r}$}(1,0)
             (0,1)--node[above]{$\mathfrak{r}$}(1,1);
  \draw[blue] (0,0)--node[left]{$\mathfrak{b}$}(0,1)
              (1,0)--node[right]{$\mathfrak{b}$}(1,1);
  \foreach \x in {(0,0),(1,0),(0,1),(1,1)}
    \fill \x circle (0.03cm);
\end{tikzpicture}
}

%%%%%%%%%%%%%%%%%%

\newcommand{\picSqGlueThree}{
\begin{tikzpicture}[very thick]
  \fill[lime!30!white] (0,0) rectangle (1,1);
  \draw[red] (0,0)--node[below]{$\mathfrak{r}$}(1,0);
  \draw[violet]    (0,1)--node[above]{$\mathfrak{v}$}(1,1);
  \draw[blue] (0,0)--node[left]{$\mathfrak{b}$}(0,1)
              (1,0)--node[right]{$\mathfrak{b}$}(1,1);
  \foreach \x in {(0,0),(1,0),(0,1),(1,1)}
    \fill \x circle (0.03cm);

 \begin{scope}[yshift=-1.45cm]
  \fill[lime!30!white] (0,0) rectangle (1,1);
  \draw[red] (0,0)--node[below]{$\mathfrak{r}$}(1,0)
             (0,1)--(1,1);
  \draw[blue] (0,0)--node[left]{$\mathfrak{b}$}(0,1)
              (1,0)--node[right]{$\mathfrak{b}$}(1,1);
  \foreach \x in {(0,0),(1,0),(0,1),(1,1)}
    \fill \x circle (0.03cm);
 \end{scope}

\end{tikzpicture}
}

%%%%%%%%%%%%%

\newcommand{\picSHMor}{
% !TeX root = surf3.tex
\begin{tikzpicture}[very thick]
  \fill[lime!30!white] (0,0) rectangle (1,1);
  \draw[red] (0,0)--node[below]{$\mathfrak{r}$}(1,0);
  \draw[violet] (0,1)--node[above]{$\mathfrak{v}$}(1,1);
  \draw[blue] (0,0)--node[left]{$\mathfrak{b}$}(0,1)
              (1,0)--node[right]{$\mathfrak{b}$}(1,1);
  \foreach \x in {(0,0),(1,0),(0,1),(1,1)}
    \fill \x circle (0.03cm);
  \node at (-1,0.5) {$A\rightsquigarrow{}$};

  \begin{scope}[xshift=3.5cm]
  \fill[lime!30!white] (0,0) rectangle (1,1);
  \draw[blue] (0,0)--node[below]{$\mathfrak{b}$}(1,0);
  \draw[violet] (0,1)--node[above]{$\mathfrak{v}$}(1,1);
  \draw[red] (0,0)--node[left]{$\mathfrak{r}$}(0,1)
              (1,0)--node[right]{$\mathfrak{r}$}(1,1);
  \foreach \x in {(0,0),(1,0),(0,1),(1,1)}
    \fill \x circle (0.03cm);
  \node at (-1,0.5) {$B\rightsquigarrow{}$};
  \end{scope}

  \begin{scope}[xshift=7cm]
  \fill[lime!30!white] (0,0) rectangle (1,1);
  \draw[blue] (0,0)--node[below]{$\mathfrak{b}$}(1,0);
  \draw[blue] (0,1)--node[above]{$\mathfrak{b}$}(1,1);
  \draw[red] (0,0)--node[left]{$\mathfrak{r}$}(0,1)
              (1,0)--node[right]{$\mathfrak{r}$}(1,1);
  \foreach \x in {(0,0),(1,0),(0,1),(1,1)}
    \fill \x circle (0.03cm);
  \node at (-1,0.5) {$H\rightsquigarrow{}$};
  \end{scope}

  \begin{scope}[xshift=4cm, yshift=-2cm]
  \fill[lime!30!white] (30:1)--(-90:1)--(150:1);
  \draw[red] (-90:1)--node[auto,swap]{$\mathfrak{r}$}(30:1);
  \draw[violet] (30:1)--node[auto,swap]{$\mathfrak{v}$}(150:1);
  \draw[blue] (-90:1)--node[auto]{$\mathfrak{b}$}(150:1);
  \foreach \x in {(30:1),(-90:1),(150:1)}
    \fill \x circle (0.03cm);
  \node at (-1.5,-0.2) {$M\rightsquigarrow{}$};
  \end{scope}
\end{tikzpicture}
}

%%%%%%%%%%%%%%%%%%%%%%%%

\newcommand{\picGammaZZ}{
\begin{tikzpicture}[very thick]
  \fill[lime!30!white] (0,0) rectangle (1,2);
  \draw[blue] (0,0)--node[below]{$\mathfrak{b}$}(1,0);
  \draw[violet] (0,2)--node[above]{$\mathfrak{v}$}(1,2);
  \draw[red] (0,0)--node[left]{$\mathfrak{r}$}(0,1)
              (1,0)--node[right]{$\mathfrak{r}$}(1,1);
  \draw[blue] (0,1)--node[left]{$\mathfrak{b}$}(0,2)
              (1,1)--node[right]{$\mathfrak{b}$}(1,2);

  \foreach \x in {(0,0),(1,0),(0,2),(1,2),(0,1),(1,1)}
    \fill \x circle (0.03cm);
  \node at (-1,1) {$\Gamma_{00}\sim{}$};
\end{tikzpicture}
}

%%%%%%%%%%%%%%%%%%%%%%%%%

\newcommand{\picGammaoo}{
\begin{tikzpicture}[very thick]
  \fill[lime!30!white] (0,0) rectangle (1,1);
  \draw[red] (0,0)--node[below]{$\mathfrak{r}$}(1,0);
  \draw[violet] (0,1)--node[above]{$\mathfrak{v}$}(1,1);
  \draw[blue] (0,0)--node[left]{$\mathfrak{b}$}(0,1)
              (1,0)--node[right]{$\mathfrak{b}$}(1,1);
  \foreach \x in {(0,0),(1,0),(0,1),(1,1)}
    \fill \x circle (0.03cm);
  \node at(-1,0.5) {$\Gamma_{11}\sim{}$};
\end{tikzpicture}
}

%%%%%%%%%%%%%%%%%%%%%%

\newcommand{\picGammaZo}{
\begin{tikzpicture}[very thick]
  \fill[lime!30!white] (0,0)--(1,0.5)--(1,1.5)--(0,2)--cycle;
  \draw[blue] (0,0)--node[auto,swap]{$\mathfrak{b}$}(1,0.5);
  \draw[violet] (0,2)--node[auto]{$\mathfrak{v}$}(1,1.5);
  \draw[red] (0,0)--node[left]{$\mathfrak{r}$}(0,1)
              (1,0.5)--node[right]{$\mathfrak{r}$}(1,1.5);
  \draw[blue] (0,1)--node[left]{$\mathfrak{b}$}(0,2);

  \foreach \x in {(0,0),(1,0.5),(1,1.5),(0,2),(0,1)}
    \fill \x circle (0.03cm);
  \node at (-1,1) {$\Gamma_{01}\sim{}$};
\end{tikzpicture}
}

%%%%%%%%%%%%%%%%%%%%%%%%

\newcommand{\picTriTwo}{
\begin{tikzpicture}[very thick]
  \fill[lime!30!white] (30:1)--(-90:1)--(150:1);
  \draw[red] (-90:1)--node[auto,swap]{$\mathfrak{r}$}(30:1);
  \draw[violet] (30:1)--node[auto,swap]{$\mathfrak{v}$}(150:1);
  \draw[blue] (-90:1)--node[auto]{$\mathfrak{b}$}(150:1);
  \foreach \x in {(30:1),(-90:1),(150:1)}
    \fill \x circle (0.03cm);
\end{tikzpicture}
}

\section{Introduction}

Let $\mathfrak{g}$ be a Lie algebra with an invariant inner product
$\langle\cdot,\cdot\rangle$ (of any signature). It gives rise to two interesting types of
symplectic manifolds. The first type are moduli spaces of flat $\mathfrak{g}$-connections on oriented surfaces. The second type are symplectic manifolds
connected with Poisson-Lie groups such as the Lu-Weinstein double symplectic groupoid \cite{LW} (the symplectic groupoid integrating a Poisson-Lie group) corresponding to a Manin triple
$$\mathfrak{h}_1,\mathfrak{h}_2\subset\mathfrak{g}.$$

We shall notice that these "Poisson-Lie type" symplectic manifolds are, in fact, themselves moduli spaces of flat connections, if we allow surfaces with boundary and impose boundary conditions on the flat connections. To get the Lu-Weinstein double groupoid, the surface is a square, with boundary conditions as on the picture:
$$\picSqOne$$
We shall provide formulas for  symplectic forms using holonomies of the flat connections, in the spirit of Alekseev-Malkin-Meinrenken \cite{AMM}.
The basic idea is that the symplectic form can be interpreted as the integral over the surface of the curvature of a certain connection. The integral is then readily computed in terms of parallel transport.
 Moreover we shall describe how 3dim bodies give rise to Lagrangian submanifolds; for example, this picture gives one of the two products in Lu-Weinstein double groupoid:

$$\picMul$$
This is  a symplectic version of Chern-Simons TQFT in the sense of D.~Freed \cite{F}, with appropriate boundary conditions.

The motivation for this work was to give a symplectic description of Morita equivalence of quantum tori, and moreover, to extend this Morita equivalence from Abelian T-duality to Poisson-Lie T-duality \cite{KS} (though just on the symplectic level, without performing geometrical quantization). This is done in the final section.

\subsection*{Aknowledgements} I am grateful to Anton Alekseev, David Li-Bland, \v Stefan Sak\'alo\v s and Andr\'as Szenes for useful discussion and suggestions.

\section{Colored surfaces and moduli spaces}\label{sec:colsurf}
Let $G$ be a connected Lie group and $\li,\ri$ an Ad-invariant inner product (of any signature) on its Lie algebra $\mathfrak{g}$.

We shall consider compact oriented surfaces $\Sigma$ with corners (i.e.~locally looking like $(\mathbb{R}_{\geq 0})^2$). We shall assume that none of the components of $\Sigma$ is closed and that on each component of $\partial\Sigma$ there is at least one corner.\footnote{We impose these assumptions only for simplicity reasons, as they imply that the moduli spaces defined below are non-singular.} The boundary of $\Sigma$ is thus split by the corners into a finite number of arcs.

 For each arc $a$ we choose a  Lie subalgebra $\mathfrak{h}_{(a)}\subset\mathfrak{g}$ which is Lagrangian w.r.t.\ $\li,\ri$ (i.e. $\mathfrak{h}_{(a)}^\perp=\mathfrak{h}_{(a)}^{\vphantom{\perp}}$). Let $H_{(a)}\subset G$ be the corresponding connected Lie subgroup.
We demand for every corner $x$ of $\Sigma$ that if $a$ and $b$ are the arcs meeting at $x$ then
 $\mathfrak{h}_{(a)}\cap\mathfrak{h}_{(b)}=0$.
We shall call such a $\Sigma$ (together with the choice of subalgebras) a {\em colored surface}.
$$\picEx$$

For every colored surface $\Sigma$ we define a symplectic manifold $\mathcal{M}_\Sigma$. Let us first describe $\mathcal{M}_\Sigma$ in a way which depends on a choice of certain cuts of $\Sigma$.

We keep cutting $\Sigma$ along paths connecting corners until we get a polygon. For every  side $s$ of the polygon we choose an element $g_s\in G$ such that: 
\begin{enumerate}
\item if $s$ is an arc of the boundary of $\Sigma$ then $g_s\in H_{(s)}$
\item if $s$ and $s'$ are the two sides which are the result of a cut then $g_{s'}=g_{s}^{-1}$
\item the product of all $g_s$'s along the boundary of the polygon (in their natural cyclic order) is equal to 1.
\end{enumerate}
An assignment $s\mapsto g_s$ satisfying these properties is, by definition, a point in $\mathcal{M}_\Sigma$.
$$\picExTot$$

$\mathcal{M}_\Sigma$ can thus be described as the preimage of $1$ under a map
\begin{equation}
  \prod_a H_{(a)}\times G^{\mathrm{\#cuts}}\to G.\label{consmap}
\end{equation}
The map is a submersion and thus $\mathcal{M}_\Sigma$ is a manifold.

Let us now describe $\mathcal{M}_\Sigma$ without using cuts.
Let $X\subset\Sigma$ be the set of corners of $\Sigma$ and let $\Pi_1(\Sigma,X)$ be the fundamental groupoid of $\Sigma$ (the set of objects of $\Pi_1(\Sigma,X)$ is $X$ and morphisms are homotopy classes of paths between corners). Every arc of the boundary can be seen as a morphism in $\Pi_1(\Sigma,X)$. Then
$$\mathcal{M}_\Sigma=\{F:\Pi_1(\Sigma,X)\to G;\,F(a)\in H_{(a)}\text{ for every arc }a\}.$$

Finally, let us describe $\mathcal{M}_\Sigma$ as a moduli space of flat connections.
Let $\pi:P\to \Sigma$ be a principal $G$-bundle. For every arc $a$ we choose a reduction of $P|_a$ to $H_{(a)}\subset G$, i.e. a submanifold $P_{(a)}\subset\pi^{-1}(a)$ which is a principal $H_{(a)}$-bundle over $a$. For every corner $x\in\Sigma$ we choose a point  $p_x\in P_{(a)}\cap P_{(b)}$ where $a$ and $b$ are the arcs meeting at $x$.  Let us call $\pi:P\to \Sigma$ with its additional structure a \emph{colored $G$-bundle} over $\Sigma$.

 Let us consider connections on $P$ which restrict to connections (i.e. to $\mathfrak{h}_{(a)}$-valued 1-forms) on every $P_{(a)}$; we shall call such a connection a \emph{colored connection}. $\mathcal{M}_\Sigma$ can then be described the moduli space of colored flat connections on colored $G$-bundles over $\Sigma$. The groupoid morphism $\Pi_1(\Sigma,X)\to G$ corresponding to a colored flat connection is given by parallel transport (the fiber of $P$ over any corner $x$ is trivialized by the choice of the point $p_x$). 

If $P$ is the trivial $G$-bundle $P=\Sigma\times G$ and its coloring is also trivial (i.e.\ $P_{(a)}=a\times H_{(a)}$, $p_x=(x,1)$) then a colored connection can be described as a 1-form $A\in\Omega^1(\Sigma)\otimes\mathfrak{g}$ such that the restriction of $A$ to any arc $a$ is in $\Omega^1(a)\otimes\mathfrak{h}_{(a)}$. The space of these flat connections modulo the gauge transformations (by   maps $g:\Sigma\to G$ such that $g(x)=1$ for every corner $x$ and $g(a)\subset H_{(a)}$ for every arc $a$) is a connected component of $\mathcal{M}_\Sigma$.
\section{Symplectic form in terms of holonomies}

\subsection{Symplectic form on moduli spaces of flat connections}

Let $P\to\Sigma$ be a colored $G$-bundle. Colored connections on $P$ form an affine space $\mathcal{A}_\text{col}(P)$ modeled on $\Omega^1_\text{col}(\Sigma,\mathit{Ad}_P)$, where $\Omega_\text{col}(\Sigma,\mathit{Ad}_P)\subset\Omega(\Sigma,\mathit{Ad}_P)$ is the space of forms that restrict to $\Omega(a,\mathit{Ad}_{P_{a}})$ on every arc $a\subset\partial\Sigma$.

If $A$ is a flat colored connection on $P$ then the covariant differential $d_A$ makes $\Omega_\text{col}(\Sigma,\mathit{Ad}_P)$ to a complex and we have a natural isomorphism $$T_{[P,A]}\mathcal{M}_\Sigma\cong H^1(\Omega_\text{col}(\Sigma,\mathit{Ad}_P),d_A),$$
where $[P,A]\in\mathcal{M}_\Sigma$ denotes the isomorphism class of $(P,A)$.
The antisymmetric pairing 
$$\omega([\alpha],[\beta])=\int_\Sigma\langle\alpha\wedge\beta\rangle$$
on $T_{[P,A]}\mathcal{M}_\Sigma$ ($\alpha,\beta\in\Omega^1_\text{col}(\Sigma,\mathit{Ad}_P)$) is non-degenerate by Poincar\'e--Verdier duality. 

The moduli space $\mathcal{M}_\Sigma$ becomes in this way a symplectic manifold. If $U\subset\mathcal{M}_\Sigma$ is an open subset and $\phi:U\to\mathcal{A}_\text{col}(P)$, $\phi:x\mapsto A_x$, is a smooth family of colored flat connections such that $[P,A_x]=x$, then
\begin{equation}\label{sympl}
\omega=\phi^*\omega_{\mathcal{A}}
\end{equation}
where $\omega_{\mathcal{A}}\in\Omega^2(\mathcal{A}_\text{col}(P))$
is the constant (hence closed) 2-form
$$\omega_{\mathcal{A}}(\alpha,\beta)=\int_\Sigma\langle\alpha\wedge\beta\rangle$$
on the affine space $\mathcal{A}_\text{col}(P)$; $\omega$ is therefore closed.\footnote{The symplectic manifold $(\mathcal{M}_\Sigma,\omega)$ is best described as the symplectic reduction of $(\mathcal{A}_\text{col}(P),\omega_{\mathcal{A}})$ by the group of the automorphisms of $P$ preserving the coloring. Making this statement precise is, however, rather technical.} The symplectic form $\omega$ is a straightforward generalization of the symplectic form of Atiyah-Bott \cite{AB} and Goldman \cite{G} who considered closed surfaces.

\subsection{Central extension by closed 2-forms}

Let $M$ be a manifold and let $\Omega_\textit{closed}^2(M)$ denote the space of closed 2-forms on $M$. Let us recall that the Lie algebra $\mathfrak{g}(M)$ of smooth maps $M\to\mathfrak{g}$ has a central extension
$\tilde{\mathfrak{g}}(M)$
by $\Omega_\textit{closed}^2(M)$: as a vector space,
$$\tilde{\mathfrak{g}}(M)=\mathfrak{g}(M)\oplus\Omega_\textit{closed}^2(M),$$
and the bracket is
\begin{equation}\label{bracket}
  [(t_1,\omega_1),(t_2,\omega_2)]=([t_1,t_2],\li dt_1\wedge dt_2 \ri).
\end{equation}
 The corresponding group $\tilde G(M)$, a central extension of $G(M)$ (the group of smooth maps $M\to G$) by $\Omega_\textit{closed}^2(M)$, can be described as follows: its elements are pairs
\begin{equation}
  (g,\omega),\quad g:M\to G,\ \omega\in\Omega^2(M),\ d\omega=\frac{1}{2}g^*\eta\label{ex}
\end{equation}
where the invariant 3-form $\eta$ on $G$ is given by $$\eta(u,v,w)=\li[u,v],w\ri.$$
The product in the group is
\begin{equation}
  (g_1,\omega_1)(g_2,\omega_2)=(g_1g_2,\,\omega_1+\omega_2+\frac{1}{2}\li g_1^{-1}dg_1^{\vphantom{-1}}\wedge dg_2^{\vphantom{-1}}\,g_2^{-1}\ri)\label{exx}
\end{equation}
and the  inverse
\begin{equation}
  (g,\omega)^{-1}=(g^{-1},-\omega).\label{exxx}
\end{equation}

Finally, let us also introduce an auxiliary group  $\tilde{G}_\textit{big}(M)\supset\tilde{G}(M)$:
\begin{equation}\label{gbig}
  \tilde{G}_\textit{big}(M)=G(M)\times\Omega^2(M),
\end{equation}
with the product and inverse given by
 the same formulas (\ref{exx}), (\ref{exxx}). The map
$$\tilde{G}_\textit{big}(M)\to\Omega^3_\textit{closed}(M),\quad (g,\omega)\mapsto d\omega-\frac{1}{2}g^*\eta$$
is a group morphism and $\tilde{G}(M)$ is its kernel.

\subsection{Symplectic form in terms of holonomies}

Let us cut $\Sigma$ until we get a polygon (as in Section \ref{sec:colsurf}). For each side $s$ of the polygon we have a map $\gamma_s:\mathcal{M}_\Sigma\to G$  (the holonomy along the side).

\begin{thm}\label{thm:holon}
  The symplectic form $\omega$ on $\mathcal{M}_\Sigma$ is given by
  $$(1,\omega)=\prod_s (\gamma_s,0)$$
  where the product is taken in the group $\tilde{G}_\textit{big}(\mathcal{M}_\Sigma)$ and the sides of the polygon are taken in their natural (cyclic) order.
\end{thm}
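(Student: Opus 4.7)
The plan is to reduce both sides to the same boundary integral on the polygon obtained by cutting $\Sigma$, via a gauge trivialization of the universal flat connection and Stokes' theorem. Working locally on $U\subset\mathcal{M}_\Sigma$, choose a smooth family of colored flat connections $A=A_x$ and, trivializing the bundle over the cut surface (which we denote $\Pi$), view $A$ as a $\mathfrak{g}$-valued form of bidegree $(1,0)$ on $\Pi\times U$. Formula (\ref{sympl}) then rewrites as $\omega = \tfrac{1}{2}\int_\Pi \langle d_\mathcal{M} A\wedge d_\mathcal{M} A\rangle$, where $d_\mathcal{M}$ denotes the $U$-differential. Since $\Pi$ is simply connected and $A_x$ is flat, we can write $A = g^{-1}d_\Pi g$ for some $g:\Pi\times U\to G$ with $g(v_0,\cdot)=1$ at a chosen vertex $v_0$; then, along $\partial\Pi$ parametrized by $t\mapsto p(t)$, $g(p(t),x)$ is exactly the parallel transport $\Gamma(t,x)$ of $A_x$ from $v_0$, and its values at the polygon vertices realize the partial products $\gamma_{s_1}\gamma_{s_2}\cdots$.

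Set $\mu = g^{-1}d_\mathcal{M} g$, a form of bidegree $(0,1)$. The Maurer-Cartan equation for $g^{-1}dg = A+\mu$ in bidegree $(1,1)$ reads $d_\mathcal{M} A + d_\Pi\mu + [A,\mu] = 0$. Combining this with $d_\Pi A = -\tfrac{1}{2}[A,A]$ and the ad-invariance of $\langle\cdot,\cdot\rangle$ gives
\[d_\Pi\langle\mu\wedge d_\mathcal{M} A\rangle = \langle(d_\Pi\mu+[A,\mu])\wedge d_\mathcal{M} A\rangle = -\langle d_\mathcal{M} A\wedge d_\mathcal{M} A\rangle,\]
so Stokes' theorem converts the bulk integral into $\omega = -\tfrac{1}{2}\int_{\partial\Pi}\langle\mu\wedge d_\mathcal{M} A\rangle$. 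With $X(t) = A(\dot p(t)) = \Gamma^{-1}\partial_t\Gamma$ and $\iota_{\partial_t}\mu = 0$, contracting $\partial_t$ into the boundary integrand gives $\iota_{\partial_t}(\mu\wedge d_\mathcal{M} A) = -\mu\wedge d_\mathcal{M} X$, hence
\[\omega = \tfrac{1}{2}\int_0^L\langle\Gamma^{-1}d_\mathcal{M}\Gamma\wedge d_\mathcal{M} X(t)\rangle\,dt.\]

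On the other hand, the product $\prod_s(\gamma_s,0)$ is the value at $t=L$ of the $\tilde{G}_\textit{big}(\mathcal{M}_\Sigma)$-valued path $(\Gamma(t),\omega(t))$ satisfying $(\Gamma,\omega)^{-1}\partial_t(\Gamma,\omega) = (X(t),0)$; expanding (\ref{exx}) to first order in $\delta t$ yields $\partial_t\omega(t) = \tfrac{1}{2}\langle\Gamma^{-1}d_\mathcal{M}\Gamma\wedge d_\mathcal{M} X\rangle$, so integrating from $0$ to $L$ produces exactly the same 2-form. Flatness of $A$ on the simply connected $\Pi$ forces $\Gamma(L) = 1$, so the product indeed equals $(1,\omega)$. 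The principal difficulty is the Maurer-Cartan manipulation of paragraph two: one must track graded signs in the Leibniz rule, in the form-valued ad-invariance identity $\langle[a,b]\wedge c\rangle = \langle a\wedge[b,c]\rangle$, and in the contraction $\iota_{\partial_t}$ with sufficient care that the sign in the final identity matches that of the group-theoretic formula.
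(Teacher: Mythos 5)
Your overall strategy is the paper's own, just unpacked: the Maurer--Cartan/Stokes computation in your first two paragraphs reproduces the two lemmas the paper isolates, namely that $\omega=\int_\Sigma\tilde F$ for the curvature $\tilde F$ of the lifted connection $(A,0)$ on the $\tilde G(U)$-bundle, and that the integral over a disk of the curvature of a lift of a flat connection equals the boundary holonomy of the lift. That part is sound (modulo the signs you flag, which do work out). The gap is in your last paragraph, at the sentence claiming that ``the product $\prod_s(\gamma_s,0)$ is the value at $t=L$ of the path $(\Gamma(t),\omega(t))$.'' What the composition property of your ODE actually gives is $(\Gamma(L),\omega(L))=\prod_s(\gamma_s,\beta_s)$, where $(\gamma_s,\beta_s)$ is the $\tilde{G}_\textit{big}$-holonomy of $(A,0)$ along the single side $s$ (i.e.\ the solution of the same ODE restarted from $(1,0)$ at the initial vertex of $s$); nothing you have written shows that each $\beta_s$ may be replaced by $0$, and for cut sides this is in fact false factor by factor.

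Two short arguments are needed, and they are exactly where the hypotheses on the coloring enter --- your proof never uses that the $\mathfrak{h}_{(a)}$ are Lagrangian, which should be a warning sign. For a side $s\subset\partial\Sigma$, one has $X(t)\in\mathfrak{h}_{(s)}(U)$, hence the partial transport $\Lambda(t)$ along $s$ stays in $H_{(s)}(U)$, and the source term $\tfrac12\langle\Lambda^{-1}d_{\mathcal M}\Lambda\wedge d_{\mathcal M}X\rangle$ in the ODE for the $\Omega^2$-component vanishes because $\mathfrak{h}_{(s)}$ is isotropic; so $\beta_s=0$. (Note that in your global path the corresponding term $\tfrac12\langle\Gamma^{-1}d_{\mathcal M}\Gamma\wedge d_{\mathcal M}X\rangle$ does \emph{not} vanish along boundary arcs, since $\Gamma$ is the transport all the way from $v_0$ and is not $H_{(s)}$-valued; you must restart at each vertex and invoke the composition law to get the discrete product at all.) For a side $s$ produced by a cut, $\beta_s$ is in general nonzero; here one uses that the two sides of one cut carry mutually inverse holonomies, so by (\ref{exxx}) they contribute $\beta_s$ and $-\beta_s$, and since $\Omega^2(\mathcal{M}_\Sigma)$ is central these cancel in the cyclic product, which is therefore unchanged when both are set to $0$. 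With these two observations added, your argument closes and coincides with the paper's proof.
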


The idea of the proof is that $\omega$ is the integral of the curvature of a $\tilde{\mathfrak{g}}(\mathcal{M}_\Sigma)$-valued connection on $\Sigma$, and hence can be expressed in terms of the holonomies $g_s$'s. The proof is in Section \ref{sec:holproof}.
The formula for $\omega$ is a generalization of a similar formula of Alekseev-Malkin-Meinrenken \cite{AMM} for the case of closed surfaces.

\subsection{Integral of curvature}

Let
$$C\to\tilde K\to K$$
be a central extension of Lie groups. Let $\tilde P\to D$ be a principal $\tilde K$-bundle over a disk $D$ and let $P\to D$ be the corresponding $K$-bundle, $P=\tilde P/C$. Suppose that $A$ is a flat connection on $P$ and $\tilde A$ is a (non-flat) connection on $\tilde P$ lifting $A$. 
 The curvature $\tilde F$ of $\tilde A$ is a $\mathfrak{c}$-valued 2-form on $D$ and its integral is
\begin{equation}\label{intcur}
  C \ni\exp\int_D \tilde F=\mathrm{hol}_{\partial D}\,\tilde A.
\end{equation}

The proof of this simple claim is obvious: trivialize $\tilde P\to D$ (and hence $P\to D$) in such a way that the connection $A$ on $P=D\times K$ becomes trivial.
 Such a trivialization can be achieved e.g.\ by  by the parallel transport of $\tilde A$ along straight lines starting at the center of the disc $D$. Formula \eqref{intcur} then becomes Stokes theorem.

We shall use (\ref{intcur}) for the central extension
$$\Omega_\textit{closed}^2(M)\to\tilde{G}(M)\to G(M).$$

\subsection{Symplectic form as integral of curvature}

Let $\Sigma$ be a colored surface. Let $U\subset\mathcal{M}_\Sigma$ be an open subset, $P\to\Sigma$ a colored $G$-bundle and $A_x$ a smooth family of colored flat connections on $P$ parametrized by $x\in U$, such that the class of $(P,A_x)$ is $x$. $\mathcal{M}_\Sigma$ can be covered by such open subsets $U$.

Using the inclusion $G\to \tilde G(U)$, $g\mapsto (g,0)$, we lift $P$ to a principal $\tilde G(U)$-bundle $\tilde P_U\to D$. Similarly, the inclusion $G\to G(M)$ lifts $P$ to a principal $G(U)$-bundle $P_U\to\Sigma$ and $P_U=\tilde P_U/\Omega^2_\textit{closed}(U)$.

The family $A_x$ can be seen as a flat connection $A$ on $P_U$, and $\tilde A=(A,0)$  as a (non-flat) connection on $\tilde P_U$. The curvature $\tilde F$ of $\tilde A$ is a $\Omega_\textit{closed}^2(U)$-valued 2-form on $\Sigma$, and the integral of $\tilde F$ is (using (\ref{sympl}) and (\ref{bracket})) the symplectic form on $U\subset \mathcal{M}_\Sigma$:
\begin{equation}\label{om-intF}
  \omega=\int_\Sigma\tilde F.
\end{equation}

\subsection{Symplectic form in terms of holonomies (proof)}\label{sec:holproof}

%We shall now compute (\ref{om-intF}) using (\ref{intcur}). Notice that in our case the exponential map in (\ref{intcur}) is just the identity.

\begin{proof}[Proof of Theorem \ref{thm:holon}]
 It follows immediately from (\ref{om-intF}) and (\ref{intcur}). We cut $\Sigma$ to a polygon. If $s\subset\partial\Sigma$ then $(\gamma_s,0)$ is the holonomy of $\tilde A$ along $s$ (since $\mathfrak{h}_{(s)}$ is isotropic and thus the cocycle in (\ref{bracket}) vanishes). If $s$ comes from a cut then the holonomy of $\tilde A$ along $s$ is $(\gamma_s,\beta)$ for some 2-form $\beta$. However, the holonomy along the other side coming from the same cut is its inverse; we can thus replace $\beta$ with 0 and the product of holonomies will not change.
\end{proof}

\subsection{Examples}

Let $\Sigma$ be a square colored by a Manin triple $\mathfrak{r},\mathfrak{b}\subset\mathfrak{g}$:
$$\picSqOneOne$$
Let us denote the holonomies as on the picture, i.e.
$$\mathcal{M}_\Sigma=\{(r_1,r_2,b_1,b_2)\in R\times R\times B\times B;\;r_1b_1=b_2r_2\}:$$
$$\picSqTwo$$
In $\tilde G(\mathcal{M}_\Sigma)$ we have
$(r_1,0)(b_1,0)=(r_1b_1,\li  r_1^{-1}dr_1^{\vphantom{-1}}\wedge db_1^{\vphantom{-1}} b_1^{-1}\ri/2)$ and
$(b_2,0)(r_2,0)=(b_2r_2,\li  b_2^{-1}db_2^{\vphantom{-1}}\wedge dr_2^{\vphantom{-1}} r_2^{-1}\ri/2)$. The symplectic form on $\mathcal{M}_\Sigma$ is thus
$$\omega=\frac{1}{2}\li r_1^{-1}dr_1^{\vphantom{-1}}\wedge db_1^{\vphantom{-1}} b_1^{-1}\ri-\frac{1}{2}\li b_2^{-1}db_2^{\vphantom{-1}}\wedge dr_2^{\vphantom{-1}} r_2^{-1}\ri.$$
This symplectic manifold $(\mathcal{M}_\Sigma,\omega)$ is the Lu-Weinstein double symplectic groupoid \cite{LW} corresponding to the triple $R,B\subset G$. This fact was already noticed in \cite{Se2}. A similar interpretation of the Lu-Weinstein double groupoid was found by P.~Boalch in \cite{B} using irregular connections.

Let now $\Sigma$ be a square colored as follows:
$$\picSqThree$$
The symplectic form is
$$\omega=\frac{1}{2}\li r^{-1}dr\wedge db_1^{\vphantom{-1}} b_1^{-1}\ri-\frac{1}{2}\li b_2^{-1}db_2^{\vphantom{-1}}\wedge dv\,v^{-1}\ri.$$
The symplectic manifold $(\mathcal{M}_\Sigma,\omega)$ is again a well-known object: it is the symplectic groupoid integrating the homogeneous Poisson space given by $R,B,V\subset G$ via Drinfeld's classification \cite{D}. This symplectic groupoid was discovered by Jiang-Hua Lu \cite{L}.

Now let $\Sigma$ be a triangle.
$$\picTri$$
In this case
$$\mathcal{M}_\Sigma=\{(r,b,v)\in R\times B\times V;\;rbv=1\}.$$
The symplectic form is
$$\omega=\frac{1}{2}\li v^{-1}dv\wedge db\,b^{-1}\ri.$$
This symplectic manifold is, up to covering, the big symplectic leaf in the homogeneous Poisson space given by $R,B,V\subset G$. It will play a role when we discuss Morita equivalence.

Finally, let us discuss the simplest $\Sigma$ that requires a cut.
$$\picDouble$$
$$\mathcal{M}_\Sigma=\{(r_1,r_2,b_1,b_2,g)\in R^2\times B^2\times G;\;
   r_1b_1g=g\,r_2b_2\}$$
$$(1,\omega)=(r_1,0)(b_1,0)(g,0)\left((g,0)(r_2,0)(b_2,0)\right)^{-1}$$
The symplectic manifold $(\mathcal{M}_\Sigma,\omega)$ is the double symplectic groupoid integrating the Drinfeld double given by the triple $R,B\subset G$.

\section{Painted bodies and Lagrangian submanifolds}

In this section we shall discuss how cobordisms of painted surfaces give rise to Lagrangian submanifolds in the moduli spaces. These Lagrangian submanifolds will turn the moduli spaces into interesting algebraic objects, such as (double) groupoids, modules, etc. The Lagrangian submanifold will consist of those flat connection on the surface that can be extended to flat connections on the 3dim manifold (cobordism). This construction is a straightforward generalization of the symplectic Chern-Simons theory of D.~Freed \cite{F}, who considered closed surfaces.

\subsection{Painted bodies}
Let us  consider a compact oriented 3dim manifold with corners (i.e.\ locally looking as $(\mathbb{R}_{\geq0})^3$). Its boundary is divided to vertices (corners), edges and faces. For some of the faces we choose a Lagrangian Lie subalgebra of $\fr{g}$ (we shall call such a face painted). We shall require the following. Whenever two faces meet along an edge then at least one of them is painted, and if both are painted, then the two subalgebras are transverse. At each vertex should meet two painted and one unpainted face. Finally, the unpainted part of the boundary should be a colored surface, i.e.\ each of its components should have boundary and on each of the boundary circles there should be a corner. We shall call such a manifold a \emph{painted body}. The unpainted part of the boundary of a body $X$ will be denoted $\Sigma_X$.

\subsection{Flat connections on a painted body}
Let $X$ be a painted body. We shall consider principal $G$-bundles $P\to X$ with a reduction to $H_{(a)}$ over every painted face $a$ and with a section over every edge between painted faces. We then consider flat connections compatible with the reductions.
% Let us consider a smooth family $A$ of such connections parametrized by some manifold $M$, i.e.\ $A$ is a flat $\fr{g}(M)$-connection on $X$. When we restrict $A$ to $\Sigma_X$, we get a family of flat connections on the colored surface $\Sigma_X$, and thus a map $f_A:M\to\mathcal{M}_{\Sigma_X}$.
%
%\begin{lem}
% The map $f_A:M\to\mathcal{M}_{\Sigma_X}$ satisfies
%$$f_A^*\omega=0,$$
%where $\omega$ is the symplectic form on $\mathcal{M}_{\Sigma_X}$.
%\end{lem}
%
%\begin{proof}
%Let us again lift $A$ to the (non-flat) $\tilde{\mathfrak{g}}(M)$-connection $\tilde A=(A,0)$. The curvature $\tilde F$ of $\tilde A$ is a closed 2-form on $X$ which vanishes on the painted faces of $X$. Therefore
%$$\int_{\Sigma_X}\tilde F =0.$$
%This integral is, by definition, equal to $f_A^*\omega$.
%\end{proof}
%
Let $\mathcal{L}_X\subset\mathcal{M}_{\Sigma_X}$  denote the set of equivalence classes of flat colored connections on $\Sigma_X$ that are extensible to $X$. We shall call $\mathcal{L}_X$ \emph{smooth} if it is a submanifold and moreover it can be locally lifted to a smooth family of flat connections on $X$. 

If $\mathcal{L}_X$ is smooth then  it  is Lagrangian, as can be easily shown by a formal calculation of its tangent space.
Let $P$ be a painted $G$-bundle over $X$, $Ad_P\to X$ the vector bundle associated to the adjoint representation of $G$ on $\mathfrak{g}$, and let
$$\Omega_\text{col}(X,Ad_P)\subset \Omega(X,Ad_P)$$
be the space of $Ad_P$-valued differential forms that take values in the corresponding subalgebra of $\fr{g}$ when restricted to a painted face of $X$. Let $A$ be a flat colored connection; then $d_A$ makes
$\Omega_\text{col}(X,Ad_P)$ into a complex. Let us denote this complex
$\Omega_A(X)$
and its cohomology
$H_A(X)$.

Let $P'=P|_{\Sigma_X}$ and $A'=A|_{\Sigma_X}$.  Let us consider the short exact sequence
 $$0\to\Omega_{A,0}(X)\to\Omega_A(X)\to\Omega_{A'}(\Sigma_X)\to0$$
(where $\Omega_{A,0}(X)$ are the forms vanishing at $\Sigma_X$) and
the following piece of the resulting long exact sequence:
\begin{equation}\label{eq:les}
 H^1_A(X)\to H^1_{A'}(\Sigma_X)\to H^2_{A,0}(X).
\end{equation}
We have
$$H^1_{A'}(\Sigma_X)=T_{[P',A']}\mathcal{M}_{\Sigma_X},$$
and the image of the first arrow is the formal $T_{[P',A']}\mathcal{L}_X$.

By Poincar\'e duality the dual of \eqref{eq:les} is obtained just by reversing the arrows:
$$ H^2_{A,0}(X)\leftarrow H^1_{A'}(\Sigma_X)\leftarrow H^1_{A}(X)$$
(in particular, the identification of $H^1_{A'}(\Sigma_X)$ with its dual is via the symplectic form).
As a consequence, the image of the first arrow in \eqref{eq:les} is a Lagrangian subspace. 

If $\mathcal{L}_X$ is smooth then the formal tangent space is the true tangent space. We thus proved
\begin{thm}
 If $\mathcal{L}_X\subset\mathcal{M}_{\Sigma_X}$ is smooth, it is a Lagrangian submanifold.
\end{thm}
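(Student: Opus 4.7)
The author has already sketched the logic; my plan is to flesh out the three steps (tangent space identification, long exact sequence, Poincaré--Lefschetz duality) while paying attention to the point where the Lagrangian condition on the subalgebras $\mathfrak{h}_{(a)}$ is used.

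First I would identify the formal tangent space of $\mathcal{L}_X$ at a point $[P',A'] \in \mathcal{M}_{\Sigma_X}$ that extends to a flat colored connection $(P,A)$ on $X$. First-order deformations of $A$ through colored flat connections modulo infinitesimal gauge transformations (vanishing at corners and restricting to $\mathfrak{h}_{(a)}$-valued maps on painted faces) are computed by the degree-one cohomology of the complex $\Omega_A(X)$; this is $H^1_A(X)$. The restriction map to $\Sigma_X$ sends such a deformation to an element of $H^1_{A'}(\Sigma_X) = T_{[P',A']}\mathcal{M}_{\Sigma_X}$, and by construction the image is the formal tangent cone to $\mathcal{L}_X$. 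The smoothness hypothesis of the theorem is precisely what promotes this formal tangent cone to the actual tangent space.

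Next I would exploit the short exact sequence
$$0\to\Omega_{A,0}(X)\to\Omega_A(X)\to\Omega_{A'}(\Sigma_X)\to 0$$
recalled in \eqref{eq:les}, and read off that the image of $H^1_A(X)\to H^1_{A'}(\Sigma_X)$ equals the kernel of the connecting map to $H^2_{A,0}(X)$. So to show the image is Lagrangian it suffices to show the two arrows in \eqref{eq:les} are dual to one another under the symplectic identification $H^1_{A'}(\Sigma_X) \cong H^1_{A'}(\Sigma_X)^*$.

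The crux of the argument is a Poincaré--Lefschetz style duality on the painted body $X$. The inner product $\langle\cdot,\cdot\rangle$ on $\mathfrak{g}$, together with the orientation of $X$, gives a pairing $\Omega^k_A(X)\otimes \Omega^{3-k}_{A,0}(X)\to\mathbb{R}$ by wedge-and-integrate, and this pairing is chain-compatible with $d_A$. For the pairing to make sense on the complexes with boundary conditions I would check it face by face: on an unpainted face we use Dirichlet-type vanishing of the second factor; on a painted face with subalgebra $\mathfrak{h}_{(a)}$, we use that $\mathfrak{h}_{(a)}$ is Lagrangian, so $\mathfrak{h}_{(a)}$-valued forms pair trivially among themselves but pair perfectly with $\mathfrak{g}/\mathfrak{h}_{(a)}$-valued forms. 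At an edge where two painted faces meet with transverse subalgebras $\mathfrak{h}_{(a)}\oplus\mathfrak{h}_{(b)}=\mathfrak{g}$, and at the vertices (where two painted and one unpainted face meet), the same transversality/Lagrangian conditions make the complexes behave correctly; this is where the standing assumptions from the definition of a painted body are used. The resulting isomorphism $H^{3-k}_{A,0}(X)\cong H^k_A(X)^*$ combined with the analogous duality on $\Sigma_X$ (which gives the symplectic form) makes the restriction map $H^1_A(X)\to H^1_{A'}(\Sigma_X)$ dual to the connecting map $H^1_{A'}(\Sigma_X)\to H^2_{A,0}(X)$. Exactness then yields $\mathrm{image}=\mathrm{image}^\perp$, proving the Lagrangian property.

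The main obstacle I anticipate is precisely the verification of Poincaré--Lefschetz duality for the complex $\Omega_A(X)$ with the mixed (Dirichlet on unpainted faces, Lagrangian on painted faces, transversality at edges and vertices) boundary conditions in the presence of corners. Once that duality is established, the rest of the argument is a formal diagram chase. One clean way to handle this would be to triangulate $X$ compatibly with the face/edge/vertex stratification and model the complex cellularly, where the duality becomes a statement about a dual cell decomposition matching Lagrangian complements; this avoids analytic subtleties at the corners.
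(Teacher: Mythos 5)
Your proposal follows essentially the same route as the paper: identify the formal tangent space of $\mathcal{L}_X$ as the image of $H^1_A(X)\to H^1_{A'}(\Sigma_X)$, place it in the long exact sequence of the pair $(X,\Sigma_X)$, and use Poincar\'e(--Lefschetz) duality to see that the restriction map and the connecting map are mutually dual under the symplectic identification, so that exactness gives $\mathrm{image}=\mathrm{image}^\perp$. The only difference is that you spell out the face-by-face verification of the duality pairing (using the Lagrangian and transversality conditions), which the paper asserts without detail; this is a correct and welcome elaboration rather than a different argument.
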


In all the examples that we consider below, $\mathcal{L}_X$ is easily seen to be smooth.

\subsection{Examples}

%\pgfimage[width=3cm]{mult}

%\pgfimage[width=3cm]{p2}

As we noticed above, Lu-Weinstein's double symplectic groupoid corresponding to a Manin triple $B,R\subset G$ is the moduli space for the surface
$$\picSqOneOne$$
The graph of one of the products in this double groupoid is $\mathcal{L}_X$ where $X$ is 
$$\picMul$$
In other words, the product is given by gluing squares along the adjacent sides on the picture
$$\picSqGlue$$
The other product is obtained when we exchange the colors.

The moduli space of
$$\picSqTwoOne$$
is a symplectic groupoid via the gluing
$$\picSqGlueTwo$$
It is a symplectic groupoid integrating the Poisson $B$-homogeneous space corresponding to the quadruple $R,B,V\subset G$.
It is also a module of
$$\picSqOneOne$$
via the gluing 
$$\picSqGlueThree$$

Similar pictures can be drawn for the double symplectic groupoid integrating the Drinfeld double and also for its $R$-matrix in the sense of Weinstein and Xu; see \cite{Se} for details.

\section{Morita equivalence of quantum tori and beyond}

This last section is a bit speculative. On the other hand, it describes the motivation for the constructions described above, so it is included anyway.

\subsection{Morita equivalence of quantum tori}

Recall that two algebras $A$ and $B$ are said to be Morita equivalent if their categories of modules are linearly equivalent. Equivalently, there exist a $A\otimes B^\text{op}$-module $M$ and a $B\otimes A^\text{op}$ module $N$ such that $M\otimes_B N\cong A$ and $N\otimes_A M\cong B$.

Let $\theta_{ij}=-\theta_{ji}$, $1\leq i,j\leq n$, be a skew-symmetric matrix with real elements. We suppose that the graph of the corresponding linear map $\mathbb{R}^n\to\mathbb{R}^n$ intersects $\mathbb{Z}^{2n}\subset\mathbb{R}^{2n}$ only in $0\in\mathbb{Z}^{2n}$. To the matrix $\theta$ we associate the algebra $\mathbb{T}^n_\theta$ (a quantum torus) generated by elements $u_i$ ($1\leq i\leq n$) and their inverses, modulo relations $u_i u_j =\exp(2\pi \sqrt{-1} \theta_{ij})u_j u_i$.
A famous result of Rieffel and Schwarz \cite{RS} says that the algebra $\mathbb{T}^n_\theta$ is Morita equivalent to $\mathbb{T}^n_{\theta^{-1}}$.\footnote{it implies a Morita equivalence of $\mathbb{T}^n_\theta$ with $\mathbb{T}^n_{A\cdot\theta}$ for any $A\in SO(n,n;\mathbb{Z})$, where $SO(n,n;\mathbb{Z})$ acts on the graph of $\theta$ in $\mathbb{R}^{2n}$, provided the transformed graph is again a graph}

The quantum torus $\mathbb{T}^n_\theta$ can be seen as a quantization of the $n$-dimensional torus $\mathbb{T}^n$ with the constant Poisson structure given by $\theta$. The following natural questions are due to A.\ Schwarz and A.\ Weinstein (motivated by an extension of $T$-duality \cite{Sch} to Poisson-Lie $T$-duality \cite{KS}).
\begin{enumerate}
\item Is there a generalization of Morita equivalence when $\mathbb{T}^n$ is replaced by a quantum group $H$ and $\mathbb{T}^n_\theta$ by a torsor of $H$?
\item Is there a symplectic/Poisson version of Morita equivalence for tori with constant Poisson structure? Can it be extended to Poisson-Lie groups, giving a symplectic/Poisson analog of Question 1?
\end{enumerate}

We shall give an answer to Question 2. It will provide a conjectural answer to Question 1.

\subsection{$H$-Morita equivalence}

Let $H$ be a Hopf algebra. Let $A$ be an associative algebra in the (monoidal) category $H\text{-mod}$ of left $H$-modules. In other words, $A$ is an $H$-module and the product $A\otimes A\to A$ is a morphism of $H$-modules.

Let $A\text{-mod}_H$ be the category of $A$-modules in $H\text{-mod}$, i.e.\ the category of vector spaces $V$ which are modules of both $A$ and $H$, such that $A\otimes V\to V$ is a morphism of $H$-modules. Let $A\text{-mod}$ be the category of $A$-modules in the category of vector spaces. We have the forgetful functor $\mathrm{res}:A\text{-mod}_H\to A\text{-mod}$ and its left adjoint $\mathrm{ind}:A\text{-mod}\to A\text{-mod}_H$.

Let now $B$ be an algebra in the (monoidal) category $H\text{-comod}$ of right $H$-comodules. We have the category $B\text{-mod}^H$ of $B$-modules in $H\text{-comod}$ and the category $B\text{-mod}$. Now we have the forgetful functor $\mathrm{cores}:B\text{-mod}^H\to B\text{-mod}$ and its right adjoint $\mathrm{coind}:B\text{-mod}\to B\text{-mod}^H$.

We shall say that $A$ and $B$ are \emph{$H$-Morita equivalent} if there are equivalences of linear categories $A\text{-mod}\to B\text{-mod}^H$ and $A\text{-mod}_H\to B\text{-mod}$ such that the diagram
\tikzset{diagram/.style={matrix of math nodes, row sep=3em, column sep=2.5em, text height=1.5ex, text depth=0.25ex}}
\[
 \begin{tikzpicture}
  \matrix(m)[diagram]{A\text{-mod} & B\text{-mod}^H \\
 A\text{-mod}_H & B\text{-mod} \\};
\draw[->]
 (m-1-1) edge (m-1-2)
         edge node[auto,swap]{$\mathrm{ind}$}(m-2-1)
 (m-2-1) edge  (m-2-2)
(m-1-2) edge node[auto]{$\mathrm{cores}$}(m-2-2);
 \end{tikzpicture}
\]
commutes up to a natural isomorphism, or equivalently, such that
\[
 \begin{tikzpicture}
  \matrix(m)[diagram]{A\text{-mod} & B\text{-mod}^H \\
 A\text{-mod}_H & B\text{-mod} \\};
\draw[->]
 (m-1-1) edge (m-1-2)
 (m-2-1) edge node[auto]{$\mathrm{res}$}(m-1-1)
 (m-2-1) edge  (m-2-2)
(m-2-2) edge node[auto,swap]{$\mathrm{coind}$}(m-1-2);
 \end{tikzpicture}
\]
commutes up to a natural isomorphism.

The simplest example is when $A=k$ is trivial ($k$ is the base field) and $B=H$. The category $H\text{-mod}^H$ is called the category of Hopf modules of $H$. A linear equivalence $F:k\text{-mod}\to H\text{-mod}^H$ making the diagram
\[
 \begin{tikzpicture}
  \matrix(m)[diagram]{k\text{-mod} & H\text{-mod}^H \\
 H\text{-mod} & H\text{-mod} \\};
\draw[->]
 (m-1-1) edge node[auto]{$F$}(m-1-2)
         edge node[auto,swap]{$\mathrm{ind}$}(m-2-1)
 (m-2-1) edge node[auto]{$=$} (m-2-2)
(m-1-2) edge node[auto]{$\mathrm{cores}$}(m-2-2);
 \end{tikzpicture}
\]
commutative (up to a natural isomorphism) is due to Sweedler \cite{Sw}; $F$ is given simply by $F(V)=H\otimes V$.

Let us notice that any $H$-Morita equivalence is given by a vector space $M$ which is a right $A$-module and left $B$-module, satisfying the compatibility relation $b\cdot(m\cdot a)=(b_{(1)}\cdot m)\cdot (b_{(2)}\cdot a)$ for all $a\otimes m\otimes b\in A\otimes M\otimes B$, where $b\mapsto b_{(1)}\otimes b_{(2)}\in B\otimes H$ is the $H$-comodule structure of $B$. Under this condition the space $M\otimes H$ is a $B$-$A$-bimodule, given by $b\cdot (m\otimes h)=(b_{(1)}\cdot m)\otimes (b_{(2)}\cdot h)$, $(m\otimes h)\cdot a=(m\cdot (h_{(2)}\cdot a))\otimes h_{(1)}$. The functor $A\text{-mod}\to B\text{-mod}^H$ is given by $V\mapsto V\otimes_A(M\otimes H)$, and $A\text{-mod}_H\to B\text{-mod}$ by $W\mapsto \bigl(W\otimes_A(M\otimes H)\bigr)^H$, where $X^H$ (for a $H$-module $X$) is $\{x\in X; (\forall h\in H)\, h\cdot x=\epsilon(h)x\}$.

\subsection{Conjectural answer to Question 1}
Suppose again that $G$ is a connected Lie group and its Lie algebra $\mathfrak{g}$ has an invariant inner product $\langle,\rangle$. Let $\mathfrak{r},\mathfrak{b}\subset\mathfrak{g}$ be a Manin triple and let $R$ and $B$ be the corresponding Poisson-Lie groups. Suppose also that $\mathfrak{v}$ is another Lagrangian subalgebra with the property that $\mathfrak{v}\cap\mathfrak{r}$ is the Lie algebra of a closed connected subgroup $R_\mathfrak{v}\subset R$ and similarly, $\mathfrak{v}\cap\mathfrak{b}$ is the Lie algebra of a closed connected subgroup $B_\mathfrak{v}\subset B$. By Drinfeld's classification of Poisson homogeneous spaces \cite{D} the homogeneous space $R/R_\mathfrak{v}$ has a Poisson structure such that the map $R\times (R/R_\mathfrak{v})\to R/R_\mathfrak{v}$ is Poisson, and similarly for $B/B_\mathfrak{v}$.

Below we shall prove a symplectic version of the following loosely stated conjecture: If Hopf algebra $H$ is a (suitable) quantization of the Manin triple $\mathfrak{r},\mathfrak{b}\subset\mathfrak{g}$ and algebras $A$ and $B$ are (suitable) quantizations of the Poisson manifolds $R_\mathfrak{v}$ and $B_\mathfrak{v}$ respectively, then $A$ and $B$ are $H$-Morita equivalent.

In particular, if $\mathfrak{v}=\mathfrak{r}$ we get $A=k$ and $B=H$, i.e.\ Sweedler's example of $H$-Morita equivalence.

Let us notice that we prove the symplectic version of the conjecture only under the additional assumption that $\mathfrak{v}$ is transverse to both $\mathfrak{r}$ and $\mathfrak{b}$. The general case would require colored surfaces where we allow adjacent subalgebras to have non-trivial intersection. The corresponding colored $G$-bundles would have a reduction over the corresponding corner to the group exponentiating the intersection. We shall treat this generalization elsewhere.

\subsection{Symplectic $H$-Morita equivalence}

In this final section we provide a symplectic analogue of our conjectural answer to Question 1. Vector spaces are replaced by symplectic manifolds of the form $\mathcal{M}_\Sigma$ as follows:
$$\picSHMor$$
In other words, $A$ is replaced by the symplectic groupoid integrating the Poisson homogeneous space $R_\mathfrak{v}$, $B$ by the symplectic groupoid integrating $B_\mathfrak{v}$, $H$ by the double symplectic groupoid integrating both $R$ and $B$, and $M$ by the moduli space of the displayed triangle.

Let us recall the definition of Morita equivalence of symplectic groupoids \cite{X}.
Let $\mathsf{I}$ denote the groupoid with two objects $0$ and $1$ and with a unique morphism $0\to1$. Let $\Gamma$ be a Lie groupoid with a groupoid morphism $\Gamma\to\mathsf{I}$. $\Gamma$ splits naturally to 4 components: $\Gamma_{ij}$ ($i,j\in\{0,1\}$) is the space of arrows lying over the unique morphism $i\to j$. Let $M_i$ denote the space of objects of $\Gamma$ lying over $i\in\{0,1\}$. If the maps $\Gamma_{01}\to M_1$ and $\Gamma_{10}\to M_0$ are surjective  then $\Gamma_{00}$ and $\Gamma_{11}$ are said to be Morita equivalent via the bimodules $\Gamma_{01}$ and $\Gamma_{10}$. For Morita equivalence of symplectic groupoids the groupoid $\Gamma$ is required to be symplectic.

Let $\mathfrak{r},\mathfrak{b},\mathfrak{v}\subset\mathfrak{g}$ be as above and let $R,B,V\subset G$ be the corresponding groups.
Let $M_{0,0}=R\times B$ and let the arrows $(r_1,b_1)\to(r_2,b_2)$ in $\Gamma_{00}$ be $(v,r)\in V\times R$ such that $r_1b_1v=b\,r_2b_2$; composition of arrows is by $(v,r)(v',r')=(vv',rr')$. The groupoid $\Gamma_{00}$ can be seen as $\mathcal{M}_\Sigma$ for the surface
$$\picGammaZZ$$
which makes it to a symplectic groupoid. The groupoid composition is by (horizontal) gluing of rectangles.

The symplectic groupoid $\Gamma_{00}$ integrates the following Poisson structure on $R\times B$. We have the Poisson action $B\times B_\mathfrak{v}\to B_\mathfrak{v}$. The forgetful functor
$$\text{Poisson manifolds with a moment map to } R \to\text{Poisson $\mathfrak{b}$-manifolds}$$
has a right adjoint $F$ (see \cite{Se2}), and $F(B_\mathfrak{v})=R\times B$ is our Poisson manifold. It is a semi-classical analog of the crossed product $A\rtimes H$, where $A$ is a quantization of $B_\mathfrak{v}$ (an associative algebra) and $H$ a quantization of the Lie bialgebra $\mathfrak{b}$ (a Hopf algebra). Notice that $A\rtimes H\text{-mod}$ is equivalent to $A\text{-mod}_H$.

Let us suppose for simplicity that the map $R\times B\to G$, $(r,b)\mapsto rb$, is a diffeomorphism. The symplectic groupoid $\Gamma_{00}$ is Morita equivalent to the symplectic groupoid $\Gamma_{11}$ integrating the Poisson manifold $R_v$, i.e.\ $\mathcal{M}_{\Sigma}$ for the surface
$$\picGammaoo$$
The bimodule $\Gamma_{01}$ is $\mathcal{M}_{\Sigma}$ for the surface
$$\picGammaZo$$
with the bimodule structure given by gluing along the vertical sides.

This Morita equivalence of symplectic groupoids is analogous to a Morita equivalence $A\text{-mod}_H\to B\text{-mod}$. If we exchange $\mathfrak{r}$ and $\mathfrak{b}$ then we get a similar Morita equivalence, analogous to $A\text{-mod}\to B\text{-mod}^H$, and these two Morita equivalences of symplectic groupoids are easily seen to form a commutative square analogous to $H$-Morita equivalence of $A$ and $B$. Finally, the moduli space for the triangle
$$\picTriTwo$$
is analogous to the vector space $M$.

In the case when $R,B=\mathbb{T}^n$, $G=R\times B$, all these symplectic manifolds are of the form $\mathbb{R}^{2m}/\mathbb{Z}^k$, with constant symplectic structure. They can be therefore easily geometrically quantized (provided the symplectic form is integral on $\mathbb{Z}^k$) and these quantizations are compatible with the groupoid/module structures, so we get an $H$-Morita equivalence. If we choose the Planck constant so that the quantization $H$ of the double symplectic groupoid $R\times B$ is trivial, we get the standard proof of Morita equivalence of quantum tori \cite{RS}.

\end{document}